\newcommand{\bA}{{\ensuremath{\mathscr{A}}}}
\newcommand{\bL}{{\ensuremath{\mathcal{L}}}}
\newcommand{\dL}{{\ensuremath{\mathscr{L}}}}
\newcommand{\bk}{{\ensuremath{\xi}}}
\newcommand{\bM}{{\ensuremath{\mathcal{M}}}}
 \newtheorem{theorem}{Theorem}
\newtheorem{lemma}{Lemma}
\newtheorem{assumption}{Assumption}
\theoremstyle{remark}
\newtheorem{remark}{Remark}
\begin{document}
\title[On optimal control of reflected diffusions]{On  optimal  control of reflected diffusions}
\author{Adam Jonsson}
\address{Department of Engineering Sciences and Mathematics \\Lule{\aa} University of Technology, Sweden}
\email{adam.jonsson@ltu.se}
\keywords{stochastic optimal control; reflected diffusions; band policies; barrier policies}
\begin{abstract}
We study a simple singular control problem for a Brownian motion with constant drift and variance reflected at the origin. Exerting control  pushes the process towards the origin and generates a concave increasing state-dependent yield which is  discounted at a fixed rate. The most interesting feature of the problem is that its solution can be more complicated than antici\-pated. Indeed, for some parameter values, the optimal policy   involves two reflecting barriers and one repelling boundary, the action region being the union of two disjoint intervals.  We also show that the apparent ano\-maly  can be understood as involving a switch between two  strategies with different risk profiles: The risk-neutral decision maker initially gambles on  the more risky strategy, but lowers risk if this strategy underperforms.  
 \end{abstract}
 \maketitle 
 \section{Introduction}
This paper explores an unanticipated and, as of yet, unknown feature of a class of optimal control problems for reflected diffusions. Our focus centers on the process  $X=\{X_t\}_{t\geq 0}$   defined as  
\begin{align}\label{eq: X}
X_t=x+\mu t+\sigma  W_t-{\bk}_t+\dL_t, \quad t\geq 0,
 \end{align}
where $x \geq 0$ is an arbitrary initial condition, $\mu$ and $\sigma$ are constants, $W$  
 is a standard Brownian motion, and  where   ${\bk}$, the control process, is right-continuous, non-decreasing and non-anticipative (i.e., independent of future increments of $W$). Given $W$ and ${\bk}$, the non-decreasing process  $\dL$ is constructed so as to enforce a lower reflecting barrier at $0$. For   the exi\-stence and characteristics of such a process, see \cite[\S\,1]{CKM80} or  \cite[\S\,3]{EK88}. 
 
 Our problem can be interpreted in different ways (discussed below), but  we are especially interested in the case when   
$X$ models the water level in a dam having a reflecting lower boundary (cf. \cite{AB82,Yeh85,Zei04}).  In the absence of  control, the water level   evolves according to   reflected Brownian motion with constant drift and variance. Extracting an amount $\varepsilon$  of water causes the water level to drop by the same amount and generates a yield of $\eta(X)\varepsilon$.  Assume, as in \cite{Zei04}, that the extraction costs  increase  with depth in such a way that    $\eta(x)=(1-\frac{1}{x})^+$.  Thus,  $\eta$  vanishes in a neighborhood  of zero and    is    concave increasing on its support.  The  problem (stated  formally in  \Cref{sect: problem}) is to maximize the expected    total 
yield over an infinite time horizon for an exogenously given discount rate of $r>0$.  

As we show in this paper, despite the problem's simplicity, its solution can be quite complicated.  One might anticipate that control should be exerted when the state process exceeds a certain threshold $b>0$. Such  a control policy serves to enforce an \emph{upper}  reflecting barrier at $b$. For  most values of the three parameters ($\mu, \sigma$, $r$), the optimal policy indeed takes this simple form. 
Yet  for  a non-empty subset of the parameter space, the optimal policy is a \enquote{band policy} with three boundaries, $b< \theta< \lambda$. A  barrier is enforced at $\lambda$ until  $X$ drops to $\theta$, and a barrier is  enforced at $b$ from that point onward (see \Cref{fig: Fig1}).

 \begin{figure}
     \centering
       \includegraphics[width=0.6\textwidth]{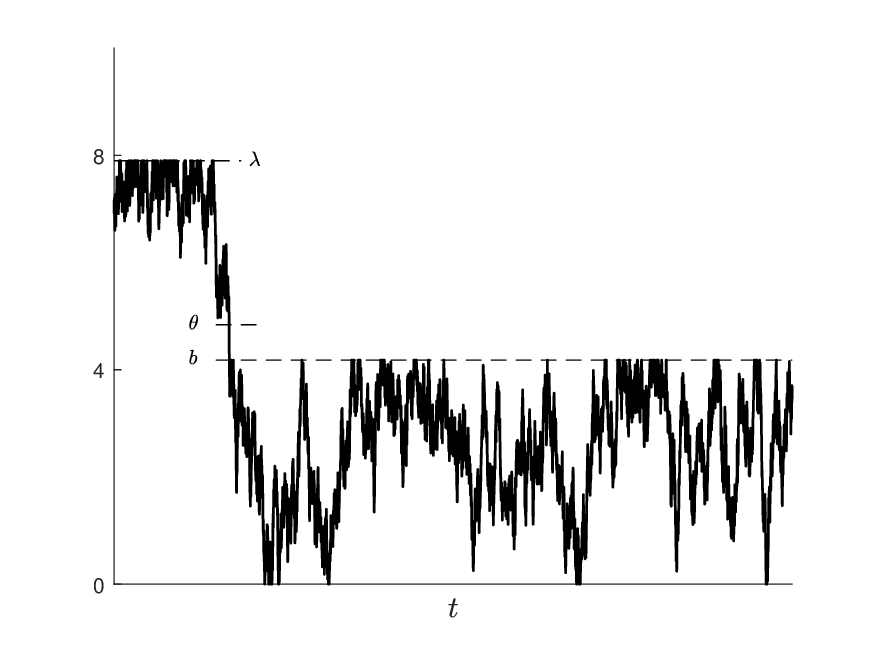} 
                 \caption{A sample path of  \eqref{eq: X}  under optimal control.}
        \label{fig: Fig1}
\end{figure}

This result, which is conjectured by Liu et al \cite{LLS03}, appears eccentric enough to call the model into question.  (Incidentally, of the people that were presented with the problem in discussion, none anticipated the stated result, and, when  informed of the result,  not one was able to provide a rationale or empiric basis for operating with three thresholds.)  Liu et al \cite{LLS03} suggest that the result is caused by the choice of marginal yield function. A  proper explanation of the result must in some way  account for the reflection, however,  because we will see  that  it is always optimal to use a single barrier in the absence of reflection, for  \emph{any} concave yield function  (see \Cref{rem: no L}). With the benefit of hindsight and the recent results of Ferrari \cite{Fer19},  we can interpret the band policy  portrayed in \Cref{fig: Fig1} as involving a switch between two modes of operation with different risk profiles. 
We postpone its discussion to our concluding remarks (\Cref{sect: discussion}).  %

The problem studied in this paper is closely related to the  problem of Karatzas and Shreve \cite{KS85}  in which   a reflected Brownian motion is controlled by a process of bounded variation.  Versions of this problem  arise as inventory and asset management problems where the last two terms in    \eqref{eq: X}   are interpreted as \enquote{withdrawals} and \enquote{deposits}, respectively \cite{TLY22,ES11,LZ08,FS19,ZY16}. The form of the optimal policy in these problems is well understood in the case when the marginal yield function is constant. In this case, the optimal policy (when one exists) involves a single upper barrier  \cite{SLG84}. Ferrari  \cite{Fer19} shows that the same holds true  under a structural assumption on the   yield   function   (see \eqref{eq: structural cond} below).   The main contribution of this paper is to show that the optimal policy can be more complicated  than  anticipated. 

 We believe that our  results will be of interest    from a methodological perspective as well as from the point of view of applications.  The  methodological  interest  lies in  the fact that the application of \emph{smooth fit} becomes complicated  when  the  structural condition from  \cite{Fer19}   is relaxed.  More precisely, we find a partial lack of  smooth fit, as the value function fails to be of class $C^2$ at the   \enquote{repelling}  $\theta$-boundary. Smooth fit has proven highly effective in solving optimal control problems  and there is a growing literature on its limitations \cite{Pes07,GW09,GT09,DFM15,DFM18}.  It is known, for instance, that    smooth fit may fail due to the non-convexity of the cost structure  \cite{DFM18}  or the irregularity of the scale function of the underlying diffusion \cite{Pes07}.    We present what to our knowledge is the first example demonstrating that a lack of smooth fit can arise as a consequence of exogenous reflection.

  Our practical interest in   understanding the phenomenon  portrayed in \Cref{fig: Fig1}   is motivated by the popularity of the reflected Brownian model \eqref{eq: X} in applications of the aforementioned types. In applications, one typically wants to  determine a single threshold at which control should be exerted. (We mention an exception to this rule  in \Cref{sect: discussion}.)  Much research effort has in fact focused on identifying conditions on the yield structure that guarantee the existence of   optimal policies of  barrier type \cite{Por77,Fer19,Zei04,Alv00,JJZ08}.  There thus seems to be disagreement between the management of real world systems on the one hand and the model's prescriptions on the other. We   believe that  this  may lead  many to wonder  (as we have)  if   the phenomenon  portrayed in Figure \ref{fig: Fig1} is signalling a flaw in the  model of how such systems evolve in time.  As indicated above, we will suggest, on the contrary, 
  that the phenomenon provides insight.   At the same time, our results indicate that  the phenomenon  is very rare and that there is little to be gained by operating with three thresholds  (see \Cref{rem: sensitivity to perturb}).  In practice,   the structural condition from  \cite{Fer19} thus seems rather unrestrictive.

The paper  is organized as follows:   In the next section we give a precise statement of the problem and of our main result. 
\Cref{sect: v for simple} describes the methods that we will use and presents some auxiliary results.   As in \cite{LLS03}, we use a \text{guess-and-verify} app\-roach relying on a martingale formulation of   the dynamic programming principle. In \Cref{sect: barrier p,sect: optimal p} we prove the main result. Finally, in \Cref{sect: discussion}    we  discuss the mode switch alluded to above.

    \section{Problem formulation and statement of results}\label{sect: problem}
 \subsection{Problem formulation} To formulate our problem in precise terms,   we take as primitive  a  Brownian motion $W=\{W_t\}_{t\geq 0}$ on a complete   probability space with a filtration $\{\mathscr{F}_t\}$ satisfying the usual assumptions.  The right-continuous, non-decreasing, $\{\mathscr{F}_t\}$-adapted processes form the class $\bA$ of   admissible control   policies. Given   ${\bk} \in \bA$, the non-decreasing process $\dL$ pushes  by the minimal amount needed   to ensure   that   \eqref{eq: X}    stays non-negative. In the absence of control, $\dL$ is just the local time  at $0$ of a Brownian motion starting at $x$.   In general,  $\dL$  may jump if needed to prevent     $X$  from jumping across the origin. However, we will primarily be interested in policies for which $\dL$ is  a.s.  continuous as a function of $t$.

In view of previous studies of similar problems (see \cite{BC67,BSW80,Kar83}), we are especially interested in policies   with  singular (local-time-like)  parts. Such a  policy is here characterized by a closed set $\mathcal{A}\subset (0, \infty)$ of \emph{action} and its complement (the region of \emph{inaction}) as follows: 
If $X$ starts  in the interior of $\mathcal{A}$, then control is exerted   to instantaneously get   to the boundary of $\mathcal{A}$;  if $X$ starts  in the complement of $\mathcal{A}$, then control is exerted by     the minimal amount   needed to prevent   $X$ from crossing the boundary of  $\mathcal{A}$.  If ${\bk}$   has action region   $[b,\infty)$,  then ${\bk}$ serves to enforce an \emph{upper} reflecting barrier at $b$.    We refer to such a policy as  a \emph{barrier policy}.  
  The  barrier policy with threshold $b$ is denoted by   ${\bk}^{(b)}$.  
 
If  ${\bk} \in \bA$  is a.s. continuous as a function of $t$,   the   total discounted   yield that ${\bk}$ generates over the interval $[s, t) \subset [0, \infty)$ is given by  the random variable  
   \begin{align}\label{def: Sti}
   \int_s^t e^{-{r} u}\eta(X_u)d{\bk}_u.
 \end{align} 
For possibly discontinuous  ${\bk}  \in \bA$, we are led to consider  
        \begin{align}\label{def: eta circ Z}
  \int_{[s,t)}e^{-{r} u}\eta(X_u) \circ d{\bk}_u:= &\int_s^t e^{-{r} u}\eta(X_u)d{\bk}^c_u 
  +\sum_{s\leq u < t}e^{-{r} u}\int_{X_s}^{X_{s_-}} \eta(u)du,
 \end{align}
 where ${\bk}^c$ is the continuous part of ${\bk}$ and where the sum is taken over the disconti\-nuity points of  ${\bk}$. The definition \eqref{def: eta circ Z} represents the standard  way of dealing with the fact that  the Stieltjes integral  \eqref{def: Sti} does not properly account for lump rewards generated at discontinuity points of ${\bk}$  \cite{DZ98,Zhu92,JJZ08}.\footnote{The right hand side of  \eqref{def: eta circ Z} actually needs an additional term if ${\bk} \in \bA$ is such that  $\dL$  must jump to prevent  $X$  from jumping across the origin. (Our definition of $\bA$  \emph{does} allow for this possibility.) However, \Cref{assumption1} below will  leave the decision maker  without incentive to exert control  near  $0$. For expositional brevity, we will  therefore ignore the possibility that $\dL$ has jumps.} If $s=0$ and ${\bk}_0>0$,   the sum in \eqref{def: eta circ Z} carries a leading term of $\int^{x}_{x-{\bk}_0}\eta(u)du$. 
 
 The \emph{value} of   ${\bk} \in \bA$ is defined   
   \begin{align} 
   V_{\bk}(x)=\mathbb{E}_x \int_{[0,\infty)} e^{-{r} t}\eta(X_t) \circ d{\bk}_t,
 \end{align}
 where $\mathbb{E}_x$ denotes expectation conditioned on the fact that $X$ starts at $x$.   The problem is to maximize $V_{\bk}(x)$ over ${\bk} \in \bA$. The \emph{value function}, $V(x)$,    
  is defined 
   \begin{align}\label{def: V}
   V(x)=\sup_{{\bk} \in \bA} V_{\bk}(x). 
 \end{align}

Our main result 
concerns the marginal yield function 
 \begin{align}\label{def: eta}
 \eta(x):=(1-\frac{1}{x})^+ 
 \end{align}
  from \cite{LLS03,Zei04}. 
 Throughout, we make the following  assumptions on $\eta$. 
   \begin{assumption}\label{assumption1} {\hphantom{h}}       \begin{itemize} 
      \item[(i)]  $\eta\colon [0, \infty) \to [0, 1]$ is  increasing and  concave   on its support, 
       \item[(ii)]  $\eta(x^\ast)=0$ for  some  $x^\ast >0$.
 \end{itemize}
  \end{assumption}
 It is easy to show that $V$ would be infinite if $\eta(0)$ were strictly positive. \Cref{assumption1}\,(ii)     removes   incentive to exert control near $0$  and ensures that  $V$ is finite. A more general condition is formulated as Assumption 2.9 in   \cite{Fer19}.   
    
\subsection{Prior results}\label{subsect: prior results} 
Aside from  \cite{LLS03}, the two works most closely related to our study  are   the papers by  Zeitouni  \cite{Zei04} and  Ferrari 
\cite{Fer19}.  Zeitouni \cite{Zei04} formulates and partially solves  the problem that we have stated. More precisely,   she identifies a subset of the parameter space on which the optimal policy involves a single upper barrier.  Ferrari \cite{Fer19}  obtains similar results for   general reflected diffusions under a condition on the yield structure. In the case of constant  coefficients, this structural condition  states that 
\begin{align}\label{eq: structural cond}
-r\cdot \eta(x)+\mu\cdot \eta'(x)+\frac{\sigma^2}{2} \eta''(x)
 \end{align} changes sign at most once    \cite[p. 953]{Fer19}. Similar conditions are used by other authors to ensure the   existence of   optimal  barrier policies (see, e.g., Theorem 2 in \cite{Alv00}, Assumption 5 in \cite{JJZ08} and Assumption 2.5  in \cite{LZ11}).  
 
We remark that  \cite{Fer19}  concerns the more general problem of maximizing   
\begin{align}\label{def: V Fer}
   \mathbb{E}_x  \Bigl( \int_{[0,\infty)} e^{-{r} t}\eta(X_t)\circ d{\bk}_t-\kappa \int_0^\infty e^{-rt}d\dL\Bigr),
 \end{align}
where $\kappa  \geq \eta(0)$ is interpreted as the marginal cost for reflection. Costly (endogenous) reflection is considered in  optimal  harvesting and renewing problems and in optimal dividends problems with compulsory capital injections  \cite{TLY22,LZ08,ZY16}. Our main result holds also with respect to the criterion \eqref{def: V Fer}, at least if $\kappa>0$ is small. (The condition $\kappa \geq \eta(0)$ holds for every $\kappa>0$  by  \Cref{assumption1}\,(ii).)  Note, however, that the assumption of \emph{increasing} marginal yield needs motivation in the optimal dividends interpretation of the problem, where it is commonly assumed that $\eta$ imposes proportional transaction   costs on dividends (e.g., in the form of a  
constant tax rate). 
\subsection{The main result}\label{subsect: main result} 
Our main result is the following.    
 
  \begin{theorem}\label{theorem1} 
  Let  $\eta$  be   as in  \eqref{def: eta}. 
 For  some   $\mu$, $\sigma$ and $r$,  there are constants $0<b<\theta<\lambda$ such that  \emph{(i)}  ${\bk}^{(b)}$ is optimal within the class of barrier policies, \emph{(ii)} the   optimal policy has action region  $[b, \theta] \cup [\lambda, \infty)$,  \emph{(iii)}  $V$ is twice continuously differentiable  on $[0, \theta)\cup (\theta, \infty)$, but   only once continuously differentiable at  $x=\theta$.
\end{theorem}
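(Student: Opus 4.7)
The plan is to follow a guess-and-verify strategy based on the HJB variational inequality
\[
\max\Bigl\{\tfrac{\sigma^{2}}{2}v''(x)+\mu v'(x)-r v(x),\ \eta(x)-v'(x)\Bigr\}=0,\qquad x>0,
\]
with the Neumann condition $v'(0)=0$ enforced at the reflecting boundary, combined with an It\^{o}-based verification argument using the pathwise convention \eqref{def: eta circ Z}.

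First I would construct a candidate $v$ on the conjectured partition $[0,b)\cup[b,\theta]\cup(\theta,\lambda)\cup[\lambda,\infty)$. On the two inaction intervals $(0,b)$ and $(\theta,\lambda)$, $v$ satisfies the linear ODE $\tfrac{\sigma^{2}}{2}v''+\mu v'-r v=0$, whose general solution is a two-parameter combination of exponentials; the Neumann condition at $0$ reduces the first piece to a one-parameter family. On the two action intervals, $v'(x)=\eta(x)$, so $v(x)=v(b)+\int_{b}^{x}\eta$ on $[b,\theta]$ and $v(x)=v(\lambda)+\int_{\lambda}^{x}\eta$ on $[\lambda,\infty)$. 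The six unknowns $b,\theta,\lambda$ and the three remaining integration constants are pinned down by six matching conditions: continuity of $v$ at $\theta$ and $\lambda$; smooth fit $v'=\eta$ at $b$, $\theta$ and $\lambda$; and super-contact $v''=\eta'$ at $b$ and $\lambda$. Crucially, I would \emph{not} impose super-contact at $\theta$: the system would otherwise be over-determined, and the omission is precisely what produces the $C^{2}$ gap asserted in (iii).

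Next I would exhibit a triple $(\mu,\sigma,r)$ for which this algebraic system admits a root with $0<b<\theta<\lambda$ and verify the HJB inequality pointwise. On the action intervals $[b,\theta]\cup[\lambda,\infty)$, substituting $v'=\eta$ reduces the inequality $\tfrac{\sigma^{2}}{2}v''+\mu v'-rv\leq 0$ to a sign test on the expression \eqref{eq: structural cond}; the parameter choice must sit in a regime where that expression changes sign twice, i.e.\ where the condition from \cite{Fer19} is violated. On the inaction intervals $(0,b)\cup(\theta,\lambda)$, I would verify $v'\geq\eta$ by a local Taylor comparison at the free boundaries: at $b$ and $\lambda$ super-contact yields a second-order zero of $v'-\eta$ with the correct sign, while at $\theta$ the smooth-fit condition forces $(v'-\eta)(\theta)=0$ but, because super-contact is \emph{not} imposed, one obtains $v''(\theta+)>\eta'(\theta)=v''(\theta-)$, so $v'-\eta$ is strictly positive immediately to the right of $\theta$ and this sign is then propagated up to $\lambda$. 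A standard application of It\^{o}'s formula to $e^{-rt}v(X_{t})$, combined with \eqref{def: eta circ Z}, yields $V_{\bk}(x)\leq v(x)$ for every $\bk\in\bA$, with equality attained by the band policy whose action region is $[b,\theta]\cup[\lambda,\infty)$; this proves (ii). Part (iii) is immediate from the construction, and part (i) is a corollary: the optimal barrier threshold $\tilde b^{\ast}$ is characterised by the very same smooth-fit and super-contact equations that determine $b$ in the band system, so $\tilde b^{\ast}=b$.

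The main obstacle is the verification on $(\theta,\lambda)$: without super-contact at $\theta$ one cannot appeal to the usual second-order argument, and the inequality $v'\geq\eta$ must be maintained throughout the interval by exploiting the precise shape of the ODE solution and the concavity of $\eta$. Locating parameter values for which the six-equation system is consistent \emph{and} produces the correct sign of $v''(\theta+)-\eta'(\theta)$ is equally delicate; I would approach this by a continuity argument starting from a degenerate configuration (e.g.\ $\theta=\lambda$, where the band collapses into a single barrier at $b$) and tracking how $(b,\theta,\lambda)$ bifurcates as the parameters are perturbed into the region where \eqref{eq: structural cond} changes sign twice.
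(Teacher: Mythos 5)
Your proposal follows the same route as the paper: guess a band-type candidate with the crucial decision to impose only $C^1$ fit at $\theta$ (omitting super-contact there), solve the resulting closed algebraic system for the free boundaries, and finish by a supermartingale verification argument using the pathwise integration convention \eqref{def: eta circ Z}. The candidate on the four subintervals, the role of the Neumann condition at $0$, and the observation that the construction itself delivers (iii) all match the paper. Where the paper is more explicit than your sketch is in how it finds $b$ and how it finishes the verification, and your sketch has a couple of spots that would need tightening.

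First, your remark that (i) follows because \emph{the optimal barrier threshold is characterised by the very same smooth-fit and super-contact equations that determine $b$} conceals the key difficulty: the super-contact equation (the paper's \eqref{eq: critical level2}, specialised to \eqref{eq: critical level3}) has \emph{three} roots in the relevant parameter regime, and one still has to identify which root gives the best barrier and that this root is the one consistent with the band system. The paper handles this up front by treating the barrier case as a one-variable optimisation over $b$, showing $b \mapsto V_{\bk^{(b)}}(x)$ has two local maxima (at the smallest and largest roots), and observing numerically that the smallest root $b_1$ is the global maximiser; $b_1$ is then frozen and the five conditions $C^1(\theta)$, $C^2(\lambda)$ determine $(C_1,C_2,D,\theta,\lambda)$. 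Second, your reduction of the inequality $\bL v \leq 0$ on the action region to a sign test on \eqref{eq: structural cond} is not literally correct: on $[b,\theta]\cup[\lambda,\infty)$ one has $v'=\eta$ and $v''=\eta'$ but $v$ itself is an antiderivative of $\eta$ plus a constant, so $\bL v(x) = -rv(x)+\mu\eta(x)+\tfrac{\sigma^2}{2}\eta'(x)$ and it is $(\bL v)'$, not $\bL v$, that equals \eqref{eq: structural cond}. One therefore needs boundary values plus a monotonicity argument, not a pointwise sign test. Third, and most importantly, the analytical verification on $(\theta,\lambda)$ that you flag as the main obstacle is not carried out in the paper either: the paper verifies the HJB inequalities \eqref{eq: Lv Mv leq} numerically for the specific parameter triple \eqref{eq: parameters} (\Cref{fig:MV and AV opt}) and is candid that the solvability of \eqref{eq: critical level3} and \eqref{eq: system of equations} with the stated ordering is checked, not proved. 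Your proposed continuity/bifurcation argument from the degenerate configuration $\theta=\lambda$ is a reasonable alternative for locating parameters, but it would be substantially more work than the paper's random search, and the propagation of $v'\geq\eta$ across $(\theta,\lambda)$ would still need a dedicated argument rather than the Taylor expansion at the endpoints alone.
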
 

 \section{The Bellman principle and  the verification theorem}\label{sect: v for simple}
As in \cite{LLS03},  we adopt  a guess-and-verify-app\-roach relying on the  Bellman principle of optimality. The methods that we will use are similar to those in \cite{AS98,MS05,RS96}. However, in contrast to these works, we will here need a  verification theorem for  non-$C^2$-functions (\Cref{lemma2} below).

The Bellman principle here asserts that an optimal  ${\bk}\in \bA$   must  be such that
 \begin{align}      \label{eq: Bell1}
     V(X_t)e^{-rt}=\mathbb{E}_x \Bigl( \int_{[t,\infty)} e^{-ru}\eta(X_u)\circ d{\bk}_u  \, \Big| \, \mathcal{F}_t  \Bigr) \;   \text{ for all  }t>0.
      \end{align}
    That \eqref{eq: Bell1}  is satisfied means that  
                \begin{align}      \label{def: Y0}
      Y_t:=V(X_t)e^{-rt}+\int_{[0,t)}e^{-rs}\eta(X_s)\circ d{\bk}_s, \; t \geq 0,
 \end{align}
  is  a  $\{\mathscr{F}_t\}$-martingale.  (The latter property of  \eqref{def: Y0}    is actually equivalent to  \eqref{eq: Bell1}.) 
To see this, note that    if  \eqref{eq: Bell1}  holds, then  \eqref{def: Y0}  reads as         
  \begin{align}    \label{def: Y02}
     Y_t=\mathbb{E}_x \Bigl( \int_{[0,\infty)} e^{-ru}\eta(X_u)\circ d{\bk}_u \, \Big| \, \mathcal{F}_t \Bigr),  \; t> 0. 
           \end{align}
By the tower property of conditional expectation,  
\eqref{def: Y02}    implies that  
$\mathbb{E}_x( Y_t \, \big| \,  \mathcal{F}_s)= Y_s$ for $s<t$. Hence, if    \eqref{eq: Bell1} holds,  then  \eqref{def: Y0}  is a martingale.

The Bellman principle thus says  that our search for $V$ and   optimal $\bk $   can   be restricted to pairs $(v,  {\bk})$ for which 
     \begin{align}      \label{def: Y}
      Y^{v, {\bk}}_t:=v(X_t)e^{-rt}+\int_{[0,t)}e^{-rs}\eta(X_s)\circ d{\bk}_s, \; t \geq 0\;  (Y^{v, {\bk}}_0:=v(x)),
 \end{align}
 is a  martingale. The following result tells us how $v$ should be constructed  for this to be so.
 
 \begin{lemma} \label{lemma1} Suppose that $v \colon [0, \infty) \to \mathbb{R}$ has an absolutely continuous derivative. For ${\bk} \in \bA$,  let  $Y^{v, {\bk}}$ be defined as in  \eqref{def: Y}.   Then
        \begin{align}
Y^{v, {\bk}}_t&=Y_0+\sigma \int_0^t e^{-rs}dW_s+\int_0^t v'(X_s)d\dL_s+\int_0^t \bL v(X_s)e^{-rs}ds \label{eq: Ito 1} \\&\sum_{0<s\leq t} e^{-rs}\int^{X_{s}}_{X_{s-}} \bM v(u)du+ \int_0^t\bM v(X_s)d{\bk}_s^c, 
\label{eq: Ito 2}
 \end{align}
where the sum in  \eqref{eq: Ito 2} are taken over the disconti\-nuity points of  $X$ and where
 \begin{align}
\bL v(x):= & -r v(x)+\mu\cdot v'(x)+\frac{\sigma^2}{2} v''(x), \label{eq: Lv} \\
\bM v(x):=&\eta(x)-v'(x).
 \end{align}
       \end{lemma}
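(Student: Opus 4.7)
The plan is to apply Itô's formula to the process $e^{-rt}v(X_t)$ and then combine the result with the definition \eqref{def: eta circ Z} of the $\circ$-integral. Since $v\in C^{1}$ with $v'$ absolutely continuous, $v''$ exists as a locally integrable function, and the hypotheses of the generalized Itô formula (see, e.g., Revuz--Yor~VI.1.5 or Protter~IV.7) are met: because the occupation-time measure of the continuous martingale part $\sigma W$ admits a density with respect to Lebesgue measure, the almost-everywhere ambiguity in $v''$ is immaterial, and one recovers the classical-looking formula with $v''(X_s)$ understood pointwise a.e.

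First I would write $X$ as the sum of its continuous semimartingale part $X^c_t=x+\mu t+\sigma W_t-{\bk}^c_t+\dL_t$ (using that $\dL$ is continuous under our standing assumption) and a pure-jump part carrying only the jumps of ${\bk}$, with $\Delta X_s=-\Delta{\bk}_s$. Applying the Itô formula with jumps to $v$ and then multiplying by $e^{-rt}$ via integration by parts (the bracket vanishes because $e^{-rt}$ is smooth of finite variation), I would obtain
\begin{align*}
e^{-rt}v(X_t)&=v(x)+\int_0^t e^{-rs}\bL v(X_s)\,ds+\sigma\int_0^t e^{-rs}v'(X_s)\,dW_s\\
&\quad -\int_0^t e^{-rs}v'(X_s)\,d{\bk}^c_s+\int_0^t e^{-rs}v'(X_s)\,d\dL_s\\
&\quad +\sum_{0<s\leq t}e^{-rs}\bigl[v(X_s)-v(X_{s-})\bigr],
\end{align*}
where the jump-correction terms from $v'(X_{s-})\Delta X_s$ in the stochastic integral against $d{\bk}$ collapse against the compensator in the standard Itô jump term, leaving only the net $v(X_s)-v(X_{s-})$ increments. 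Because $v$ is $C^1$, the fundamental theorem of calculus gives $v(X_s)-v(X_{s-})=\int_{X_{s-}}^{X_s}v'(u)\,du$.

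The final step is to add $\int_{[0,t)}e^{-rs}\eta(X_s)\circ d{\bk}_s$ according to \eqref{def: eta circ Z}. The continuous-${\bk}$ contribution yields $e^{-rs}\eta(X_s)\,d{\bk}^c_s$, which combines with the $-e^{-rs}v'(X_s)\,d{\bk}^c_s$ from above into $e^{-rs}\bM v(X_s)\,d{\bk}^c_s$; the jump contributions $e^{-rs}\int_{X_s}^{X_{s-}}\eta(u)\,du$ similarly combine with the $v'$-integrals over $[X_{s-},X_s]$ into $e^{-rs}\int \bM v(u)\,du$ over the jump interval, giving the sum in the statement.

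I expect the only real obstacle to be the justification of the Itô formula in the present non-$C^2$ setting, which is where regularity of the occupation-time density enters; once that is cited, the rest is bookkeeping, with a minor subtlety concerning the endpoints $\{0\}$ and $\{t\}$ in the two jump sums, harmlessly absorbed because ${\bk}$ has at most countably many jumps and the jump contribution at a single deterministic time is a null event in the pathwise bookkeeping.
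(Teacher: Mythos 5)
Your proposal follows essentially the same route as the paper: apply a generalized It\^o formula to $v(X)$, pass to the discounted process $e^{-rt}v(X_t)$, rewrite the jump increments $v(X_s)-v(X_{s-})$ as integrals of $v'$, and add the $\circ$-integral so that the $\eta$ and $v'$ contributions against $d\bk^c$ and across jumps collect into $\bM v$ terms. The paper invokes the It\^o--Meyer formula (Protter, Theorem~71) to justify the application to a function that is only $W^{2,1}_{\mathrm{loc}}$; your occupation-time-density argument is the same idea phrased slightly differently, and your remark about the endpoint bookkeeping in the jump sums matches how the paper (silently) treats it. The only visible discrepancies between your computation and the displayed lemma---the missing $v'(X_s)$ in the $dW$ integral, the missing $e^{-rs}$ in the $d\dL$ and $d\bk^c$ integrals, and the orientation of the jump integral $\int^{X_s}_{X_{s-}}$---are typographical slips in the paper's statement (the paper's own proof, and your derivation, both yield $\int_{X_s}^{X_{s-}}\bM v$ with the correct discount factors and $v'$ inside the martingale term), so these do not indicate gaps in your argument.
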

 From  \Cref{lemma1} we see that $Y^{v, {\bk}}$ is a martingale if all but the first two terms in \eqref{eq: Ito 1}-\eqref{eq: Ito 2}  are $0$. In the case when {\bk} is singular with action region $\mathcal{A}\subset (0, \infty)$,      $\dL$ is   continuous and flat off $\{t\colon X_t=0\}$. Hence,  the third term in \eqref{eq: Ito 1}   is $0$ if   $v'(0)=0$.  The set $\{t\colon X_t \in \mathcal{A}\}$ has Lebesgue measure zero,  so  the last intergal in \eqref{eq: Ito 1}  is $0$ if $\bM v$ vanishes on $\mathcal{A}$. The terms in \eqref{eq: Ito 2}  are both $0$ if $\bM v$ vanishes \emph{off}  $\mathcal{A}$. Thus,  $Y^{v, {\bk}}$ is a martingale if  $v$ 
 is  such that 
       \begin{align}\label{eq: Mv Lv zero}
\bM v(x)=0  \text{ for all }x \in \mathcal{A}  \; \text{ and }\; \bL v(x)=0  \text{ for all }x \in \mathcal{A}^c. 
 \end{align} 

\begin{proof}[Proof of \Cref{lemma1}]
  The   process  \eqref{eq: X}  is a right-continuous semimartingale with stocha\-stic  differential 
     \begin{align}\label{eq: dX post}
dX_t=\mu t+\sigma dW_t+d\dL_t-d{\bk}_t.
 \end{align}
That $v$ has an absolutely continuous derivative means that we can apply the It{\^{o}}-Meyer formula (see \cite[Theorem 71]{Pro05})  to $v(X)$. We then get
\begin{align}
v(X_t)=&v(X_0)+\int_0^t v'(X_{s-})dX_s+\frac{\sigma^2}{2}\int_0^t v''(X_{s-})d[X, X]_s^c \notag \\
&+\sum_{0<s\leq t}\{v(X_s)-v(X_{s-})-v'(X_{s-})\triangle X_s\}.
\label{eq: I-M}
 \end{align}
Here, $[X, X]$ is the quadratic variation of $X$,   sums are taken over discontinuity points of $X_s, s \in (0, t]$, and $\triangle X_s:=X_s-X_{s-}$.  Since ${\bk}$ is non-decreasing, $[X, X]_s=\sigma^2\cdot s$. 
The discontinuity points of $X_s, s \in (0, t]$,  are also discontinuity points of ${\bk}_s, s \in (0, t]$,  where $\triangle X_s=- \triangle {\bk}_s:=-({\bk}_s-{\bk}_{s-})$. Hence, 
      \begin{align*}
 -\sum_{0<s\leq t}v'(X_{s-}) \triangle X_s&= \sum_{0<s\leq t}v'(X_{s-})\triangle {\bk}_{s}=\int_0^t v'(X_{s-})d{\bk}_s-\int_0^t v'(X_{s-})d{\bk}_s^c.
 \end{align*}
Writing $v(X_s)-v(X_{s-})=-\int^{X_{s-}}_{X_s}v'(s)ds$ and collecting terms in \eqref{eq: I-M} gives 
   \begin{align}\label{eq: IM2}
v(X_t)=&v(X_0)+\sigma  \int_0^t e^{-rs}dW_s + \mu \int_0^t  v'(X_{s-})ds +\frac{\sigma^2}{2}\int_0^tv''(X_{s-})ds\notag \\
&-\sum_{0<s\leq t}\int^{X_{s-}}_{X_s}v'(s)ds-\int_0^t v'(X_{s-})d{\bk}_s^c.
 \end{align}
The corresponding representation for  the product $v(X_t)e^{-rt}$ is a straightforward consequence of  \eqref{eq: IM2} 
  (cf.  \cite{EK22}). We get   
       \begin{align}
v(X_t)e^{-rt}=&v(X_0)+\sigma  \int_0^t dW_s +   \int_0^t(-r\cdot v(X_s)+\mu \cdot v'(X_{s-}) +\frac{\sigma^2}{2}v''(X_{s-}))e^{-rs}ds\notag \\
&-\sum_{0<s\leq t}\int^{X_{s-}}_{X_s}v'(s)e^{-rs}ds-\int_0^t v'(X_{s-})e^{-rs}d{\bk}_s^c.
\label{eq: I-M last}
 \end{align}
Adding  
    \begin{align*}
\int_0^te^{-rs} \eta(X_s)\circ d{\bk}_s=\int_0^te^{-rs} \eta(X_s)d{\bk}_s^c +\sum_{0 \leq s\leq t} \int^{X_{s-}}_{X_s}e^{-rs}\eta(s)ds
 \end{align*}
 to \eqref{eq: I-M last} obtains \eqref{eq: Ito 1}-\eqref{eq: Ito 2}.
    \end{proof}
 
The verification-part of our   strategy for finding $V$  relies on the following result. 
   \begin{lemma} \label{lemma2} Suppose that $v \colon [0, \infty) \to \mathbb{R}$ has an absolutely continuous derivative, sub-exponential growth,     and that $v'(0)=0$. If, for all  $x \in [0, \infty)$, we have 
       \begin{align} 
    \bL v(x) \leq 0  
    \; \text{ and }  \;  \bM v(x)\leq 0, 
       \label{eq: Lv Mv leq}
 \end{align} 
then $v(x)\geq V(x)$,   for  all $x \in [0, \infty)$. 
  \end{lemma}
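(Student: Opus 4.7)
The plan is to fix an arbitrary admissible policy $\bk \in \bA$ with corresponding state process $X$ from \eqref{eq: X}, show that the associated process $Y^{v,\bk}$ is a local supermartingale, and deduce via expectation and an infinite-horizon limit that $V_{\bk}(x)\leq v(x)$; since $\bk$ is arbitrary this gives $V(x)\leq v(x)$.

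First I would apply \Cref{lemma1} to the pair $(v,\bk)$, which decomposes $Y^{v,\bk}_t$ as $v(x)$ plus a stochastic-integral term (at worst a continuous local martingale, to be handled by localization) plus the four non-martingale terms. I would then sign each of these in turn: the local-time term $\int_0^t v'(X_s)\,d\dL_s$ vanishes identically, because $\dL$ increases only on $\{X_s=0\}$ and $v'(0)=0$ by hypothesis; the drift term $\int_0^t \bL v(X_s)e^{-rs}\,ds$ is non-positive since $\bL v\leq 0$; the continuous-control term $\int_0^t \bM v(X_s)\,d\bk_s^c$ is non-positive since $\bM v\leq 0$ and $\bk^c$ is non-decreasing; and each jump contribution $e^{-rs}\int^{X_s}_{X_{s-}}\bM v(u)\,du$ is non-positive because at a jump of $\bk$ the process moves downward, $X_s\leq X_{s-}$, and integrating a non-positive function over $[X_s,X_{s-}]$ with the orientation inherited from the derivation of \Cref{lemma1} yields a non-positive quantity. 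Combining, one obtains $Y^{v,\bk}_t\leq v(x)+M_t$ with $M$ a continuous local martingale.

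Next I would introduce a localising sequence $\tau_n\uparrow\infty$ making $M^{\tau_n}$ a true martingale, take expectations in the inequality, and pass $n\to\infty$ by monotone convergence on the yield component (the integrand $e^{-rs}\eta(X_s)$ is non-negative) together with Fatou's lemma on the $v(X_t)$ component. This yields, for every $t\geq 0$,
\[
\mathbb{E}_x[v(X_t)e^{-rt}]+\mathbb{E}_x\!\!\int_{[0,t)}e^{-rs}\eta(X_s)\circ d\bk_s\leq v(x).
\]
Letting $t\to\infty$, monotone convergence identifies the yield integral with $V_{\bk}(x)$, and it remains to show $\mathbb{E}_x[v(X_t)e^{-rt}]\to 0$. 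Since $\bk$ is non-decreasing, a pathwise comparison in \eqref{eq: X} bounds $X_t$ above by the uncontrolled reflected Brownian motion with drift $\mu$ started at $x$, whose exponential moments $\mathbb{E}\,e^{\varepsilon X_t}$ grow at most like $e^{(\varepsilon\mu+\varepsilon^2\sigma^2/2)t}$; combined with the sub-exponential growth of $v$ this gives $\mathbb{E}_x|v(X_t)|e^{-rt}\to 0$, completing the proof.

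The main obstacle is precisely this last step. The sub-exponential growth hypothesis is exactly what is needed to kill $\mathbb{E}_x[v(X_t)e^{-rt}]$ in the limit, but one must verify that \emph{uniformly in the admissible control} $\bk$ the state $X_t$ admits the requisite exponential moment bounds; the key structural observation is that the control appears in \eqref{eq: X} with a minus sign and the reflection acts only at $0$, so the uncontrolled reflected diffusion pathwise dominates every admissible $X$, reducing matters to a standard estimate for reflected Brownian motion with drift.
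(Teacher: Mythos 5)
Your proposal is correct and follows the same route as the paper: apply \Cref{lemma1}, sign each non-martingale term to conclude $Y^{v,\bk}$ is a supermartingale, take expectations, and pass $t\to\infty$. You have filled in details the paper leaves implicit — in particular, the localization of the stochastic integral and the verification that $\mathbb{E}_x[v(X_t)e^{-rt}]\to 0$ via pathwise domination by the uncontrolled reflected diffusion and the sub-exponential-growth hypothesis — but these are exactly the steps the paper's terse argument is tacitly relying on, not a different method.
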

       \begin{proof} Let ${\bk} \in \bA$. By \Cref{lemma1},  the   inequalities in   \eqref{eq: Lv Mv leq} imply that  the process $Y^{v, {\bk}}$ is a    ${\mathscr{F}}_t$-supermartingale.  Thus, 
 $\mathbb{E}_x(Y^{v, {\bk}}_t)\leq Y^{v, {\bk}}_0=v(x)$  for  all  $x, t \in [0, \infty)$.  
             Since   $\mathbb{E}_x(Y^{v, {\bk}}_t) \to V_{{\bk}}(x)$ as $t \to \infty$,  this means that $v(x) \geq V_{\bk}(x)$ for all $x \in [0, \infty)$ and hence, since ${\bk} \in \bA$ was arbitrary,  that $v(x) \geq V(x)$ for all $x \in [0, \infty)$. 
        \end{proof}

\section{Finding the best barrier policy}\label{sect: barrier p}
The action region in \Cref{theorem1}  involves  the value of $b$ that makes  ${\bk}^{(b)}$  optimal within the class of  barrier policies.   We begin by finding this value  using  the strategy of \cite{Alv00}, where the problem of finding the best barrier policy is treated as an optimization problem in a single real variable. 

Let us first  calculate the value of ${\bk}^{(b)}$, the barrier policy with threshold $b>0$.  The action region of ${\bk}^{(b)}$ is   the interval $[b, \infty)$. In view of  \eqref{eq: Mv Lv zero},  a  candidate $v$ for  $V_{{\bk}^{(b)}}$ should satisfy      \begin{align}   \label{eq: vp zero}
   v'(0)=0, \; 
\bL v(x)=0 \; \text{ for all   }x\in [0,  b), \; 
\bM v(x)=0 \;  \text{ for  all }x\in (b, \infty).  
 \end{align}
The general solution to \eqref{eq: vp zero} can be written  
     \begin{align*} 
v(x)= \begin{cases} &A \phi(x), \quad x \in [0,  b), \\
&B +\int_b^x\eta(u)du,  \quad x \in (b,  \infty),
 \end{cases}
 \end{align*}
 where 
     \begin{align*} 
\phi(x):= \frac{e^{\gamma_+ x}}{\gamma_+}- \frac{e^{\gamma_- x}}{\gamma_-}, \; x \in [0,  1], \quad \gamma_\pm:=\frac{-\mu \pm \sqrt{\mu^2+2r\sigma^2}}{\sigma^{2}}.
  \end{align*}
For $v'$ to be (absolutely) continuous,  
it is both necessary and sufficient that 
   \begin{align}\label{def: A and B}
B=A\phi(b),  \qquad \eta(b)=A\phi'(b). 
  \end{align}
Solving for $A$ and $B$ gives 
    \begin{align}\label{def: V-Z}
v(x)=\begin{cases}
\frac{\eta(b)}{\phi'(b)}\phi(x),  \;  x \in [0, b), \\
\frac{\eta(b)\phi(b)}{\phi'(b)}+  \int_{b}^x \eta(u)du, \;   x \in [b,  \infty).
\end{cases}
 \end{align}
 With this $v$,  $Y^{v, {{\bk}^{(b)}}}$ is a  martingale by   \Cref{lemma1}, so $\mathbb{E}_x(Y^{v, {{\bk}^{(b)}}}_t)=Y^{v, {{\bk}^{(b)}}}_0=v(x)$ for all $x, t \in [0, \infty)$. Since $\mathbb{E}_xY^{v, {{\bk}^{(b)}}}_t \to V_{{\bk}^{(b)}}(x)$ as $t \to \infty$, this means that $v \equiv V_{{\bk}^{(b)}}$.  
 
Now consider the problem of maximizing $V_{{\bk}^{(b)}}(x)$ 
for fixed $x$.  The derivative of  \eqref{def: V-Z} with respect to $b$  is
   \begin{align}\label{eq: derivative}
\frac{d V_{{\bk}^{(b)}}(x)}{db}=\begin{cases}& \frac{\phi(x)}{\phi'(b)^2}\bigl( \eta'(b)\phi'(b) - \eta(b)\phi''(b)\bigr), \; x \in [0, b)\\
&  \frac{\phi(b)}{\phi'(b)^2}\bigl( \eta'(b)\phi'(b) - \eta(b)\phi''(b)\bigr), \;  x \in [b,  \infty).
 \end{cases}
 \end{align}
Thus, for each fixed  $x \in [0, 1]$, 
$b$ is a critical point of  the function 
$b \mapsto V_{{\bk}^{(b)}}(x)$    if and only if    
 \begin{align}\label{eq: critical level2}
\frac{\eta(b)\phi''(b)}{\phi'(b)} =\eta'(b).
\end{align}
We  get the same equation  using smooth fit, which here amounts to imposing $C^2$-smoothness on $V_{{\bk}^{(b)}}$.  For the left- and (respectively) righthand  side of \eqref{eq: critical level2} give the left and  right second derivatives of \eqref{def: V-Z} at $x=b$.  Hence,   $V_{{\bk}^{(b)}}$ (now considered as a function of $x$) is of class $C^2$  if and only if $b$ solves \eqref{eq: critical level2}.  
 
For \enquote{most} values of the three parameters, \eqref{eq: critical level2} has a single root $b^\ast> 0$ and  ${{\bk}^{(b^\ast)}}$ is optimal.  However,   \eqref{eq: critical level2}  \emph{can} have  more than one root. To see this, let $\eta$  be defined as in  \eqref{def: eta}. Then  \eqref{eq: critical level2} can be written 
      \begin{align}\label{eq: critical level3}
 b^2-b= \frac{e^{\gamma_+ b}-e^{\gamma_- b}}{\gamma_+e^{\gamma_+ b}-\gamma_-e^{\gamma_- b}}.
  \end{align}
Fixing $\sigma$ and choosing $\mu$ and $r$ using a random number generator, one eventually finds values of $\sigma$, $\mu$ and $r$ for which \eqref{eq: critical level3} has  three roots.  
We  focus on just one such case: 
   \begin{align}\label{eq: parameters}
\sigma= \sqrt{2}, \quad \mu =0.508378, \quad r= 0.00520074.
\end{align}
For these parameters,  the three roots of the equation  \eqref{eq: critical level3}  are 
 \begin{align}\label{eq: three b}
 b_1 \approx 4.18138,  \quad b_2 \approx5.760862,  \quad b_3 \approx8.003166. 
 \end{align}	
By our previous observation concerning  \eqref{eq: critical level2},  
each $b_i$ is a critical point of $b\mapsto V_{{\bk}^{(b)}}$. As \Cref{fig:Vb} shows,  $V_{{\bk}^{(b)}}$ has a local minimum at $b_2$ and local maxima at  $b_1$ and $b_3$.\footnote{\Cref{fig:Vb}  displays the graph of  $V_{{\bk}^{(b)}}(5)$. That the shape of the graph of $V_{{\bk}^{(b)}}(x)$ does not depend on $x$ is due to the fact that the  sign of \eqref{eq: derivative} does not depend on $x$.}  The global maximum is attained at $b=b_1$, so ${{\bk}^{(b_1)}}$ is optimal within the class of barrier policies.   In particular,  ${{\bk}^{(b_1)}}$   slightly outperforms ${{\bk}^{(b_3)}}$.

 \begin{figure}[h]
     \centering
         \begin{subfigure}[b]{0.53\textwidth}
         \centering
    \includegraphics[width=0.8\textwidth]{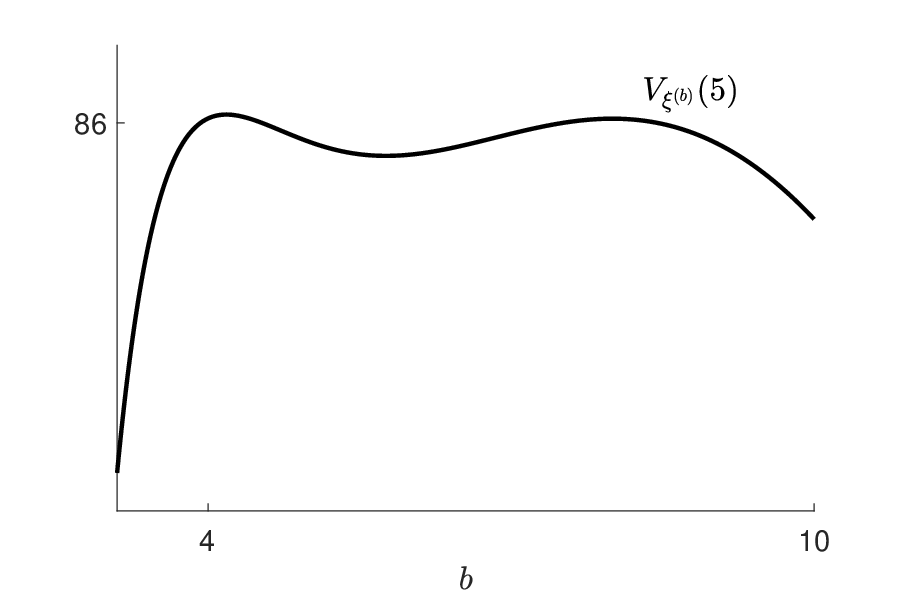}
         \caption{$V_{{\bk}^{(b)}}(5)$ vs $b$.}
         \label{fig:Vb}
     \end{subfigure}	
     \hfill
       \begin{subfigure}[b]{0.46\textwidth}
       \includegraphics[width=0.9\textwidth]{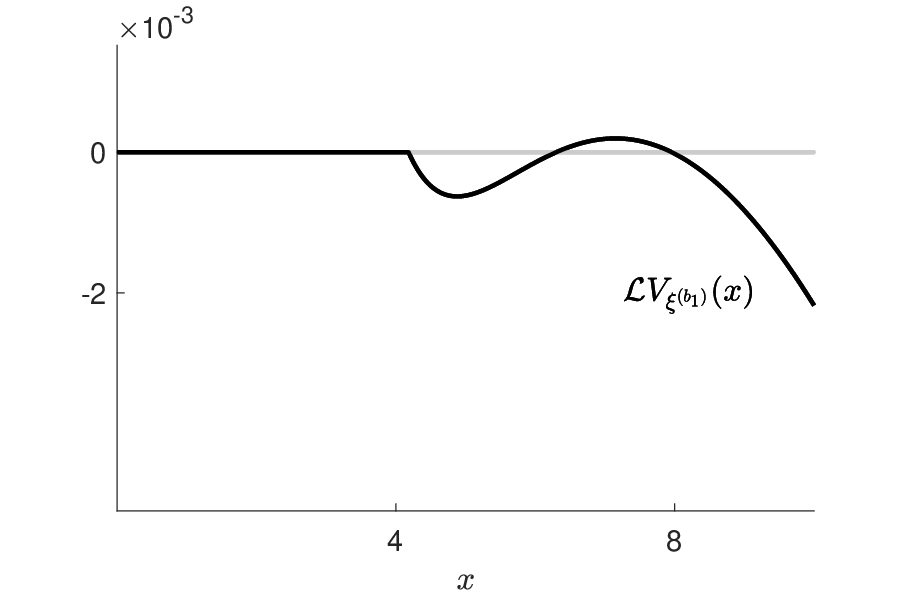}
         \caption{$\bL V_{{\bk}^{(b_1)}}(x)$ vs $x$.}
              \label{fig:MV and AV1}
     \end{subfigure}
        \caption{  Graphs of  $V_{{\bk}^{(b)}}(5)$ and $\bL V_{{\bk}^{(b_1)}}(x)$ }
        \label{fig:MV and AV}
\end{figure}

To show that  ${{\bk}^{(b_1)}}$ is optimal, and not just the best barrier policy,  we would need to complete the verification step. But, as \Cref{fig:MV and AV1} shows,    $V_{{\bk}^{(b_1)}}(x)$   does not satisfy    \eqref{eq: Lv Mv leq}. (The same is true of $V_{{\bk}^{(b_2)}}$ and $V_{{\bk}^{(b_3)}}$.) We must therefore seek a new candidate for $V$. 
 
 \begin{remark}\label{remark: three roots}
We emphasize that   multiple roots of  \eqref{eq: critical level3}   does \emph{not} rule out the existence of an optimal barrier policy. In fact, in most cases that  we considered where \eqref{eq: critical level3} has three roots,  the best barrier policy is optimal in $\bA$ and smooth fit holds. As we have already noted,  the roots of \eqref{eq: critical level3} are precisely the critical points of $b\mapsto V_{{\bk}^{(b_1)}}$. 
In general, 
if \eqref{eq: critical level3} has three roots,  then  $b\mapsto V_{{\bk}^{(b_1)}}$  has   local maxima at the smallest and the largest of the three roots (compare \Cref{fig:Vb}).  These maxima are related to the  \enquote{two modes}  alluded to above.  
  \end{remark}
  
   \begin{remark}\label{remark: three roots2}
  Liu et al \cite{LLS03} show that 
  \eqref{eq: critical level3}   has  \emph{at most one} root if $e^{\gamma_--\gamma_+}<-\gamma_+/\gamma_-$.  They also show that this equation has \emph{at most  three} roots.  
 We have not been able to find a yield function (satisfying \Cref{assumption1}) for which the corresponding equation  \eqref{eq: critical level2} has  more than three roots. 
  \end{remark}

\section{Completing the proof of \Cref{theorem1}}
\label{sect: optimal p}
  Let  $\eta$  be the function in  \eqref{def: eta}, and let $\mu, \sigma$ and $r$  be as  in  \eqref{eq: parameters}. We aim  to show that the optimal policy has action region $\mathcal{A}=[b_1, \theta] \cup [\lambda,  \infty)$, where $b_1$ is  the smallest of the three roots of the equation  \eqref{eq: critical level3} for a smooth fit.  In view of \eqref{eq: Mv Lv zero}, we define our candidate for $V$ as  
   \begin{align}\label{def: V anom}
v(x)=\begin{cases}
A(\frac{e^{\gamma_+ x}}{\gamma_+} -\frac{e^{\gamma_- x}}{\gamma_-}), \; &x \in [0,  b_1], \\
B+\int_{b_1}^x\eta_0(u)du, \;   & x \in (b_1, \theta],\\
C_1 e^{\gamma_+ x}+C_2 e^{\gamma_- x}, \; &x \in ( \theta, \lambda], \\
D +\int_{\lambda}^x\eta_0(u)du, \;  &x \in ( \lambda, \infty).\\
\end{cases}
 \end{align}

\noindent Here, $A$ and $B$ are defined by $b_1$  and the relations   \eqref{def: V-Z}.  We let the remaining five constants   $(C_1, C_2, D, \theta$ and  $\lambda)$ be determined by   $C^2$-smoothness at $x=\lambda$ and $C^1$-smoothness at $x=\theta$. These smoothness requirements lead to the equations 
 \begin{align} 
C_1 e^{\gamma_+ \theta}+C_2 e^{\gamma_- \theta} -(B+\theta-b_1-\log(\theta/b_1))&=0, \notag \\
C_1 \gamma_+e^{\gamma_+ \theta}+C_2\gamma_-e^{\gamma_- \theta}-(1-1/\theta)&=0, \notag \\
C_1    e^{\gamma_+ \lambda}+C_2  e^{\gamma_- \lambda}-D&=0, \label{eq: system of equations}\\
C_1 \gamma_+e^{\gamma_+ \lambda}+C_2\gamma_-e^{\gamma_- \lambda}-(1-\frac{1}{\lambda})&=0, \notag \\
C_1 \gamma_+^2e^{\gamma_+ \lambda}+C_2\gamma_-^2e^{\gamma_- \lambda}-\frac{1}{\lambda^2}&=0. \notag
 \end{align}
To verify that $V$ is indeed on the form \eqref{def: V anom},  we   must  first specify \eqref{def: V anom}, that is, 
 \begin{align} \notag
&\text{ (i) provide a positive solution  $(C_1, C_2, D, \theta, \lambda)$ to  \eqref{eq: system of equations}  with  $b_1<\theta <\lambda$,}   
\intertext{and  then}
&\text{  (ii)  verify  the  inequalities \eqref{eq: Lv Mv leq}  when  \eqref{def: V anom} is specified by  $(b_1, C_1, C_2, D, \theta, \lambda)$. } 
 \end{align}
We only found one positive solution  to   \eqref{eq: system of equations} with  $b_1<\theta <\lambda$:\footnote{\label{foot: fsolve}Using MatLab's \texttt{fsolve} with $b_1=4.181380317$ and the initial guess $[7,1,8, 6, 8]$ obtains  $C_1 = 8.168984282, C_2 =  1.586783674, D = 8.84977837060, \theta = 4.848847551,  \lambda = 7.950177923$. } 
 \begin{align}\label{sol: system of equations} 
(C_1, C_2, D, \theta, \lambda) \approx  (0.817, \, 1.58, \, 0.885,  \, 4.84, \, 7.95). 
\end{align}
Now use this solution to define $v^\ast$ according to \eqref{def: V anom}, and   let ${\bk}^\ast$ be the singular  policy with  action region $\mathcal{A}=[b_1, \theta] \cup [\lambda, \infty)$. Then $v^\ast$  meets the requirements in \Cref{lemma1} and satisfies \eqref{eq: Mv Lv zero}, so  $Y^{v^\ast, {\bk}^\ast}$ is a martingale. Conclude that $v^\ast=V_{{\bk}^\ast}$. 

 Turning to the verification step, \Cref{fig:MV and AV opt} displays the graphs of $\bL v^\ast$ and $\bM v^\ast$. (The discontinuity of  $\bL v^\ast$ at $x=\theta$ is a consequence of the discontinuity of the second deri\-va\-tive of $v^\ast$ at $x=\theta$.)    We see that the  inequalities in  \eqref{eq: Lv Mv leq} do hold. Since $v^\ast$ satisfies the conditions in \Cref{lemma2}, this means that $v^\ast (x) \geq V(x)$, for all $x\geq 0$. We can now conclude that  $v^\ast= V$, so that ${\bk}^\ast$ is optimal, and the proof of \Cref{theorem1} is therefore complete. 
 \begin{figure}[h]
     \begin{subfigure}[b]{0.47\textwidth}
         \centering
     \includegraphics[width=0.9\textwidth]{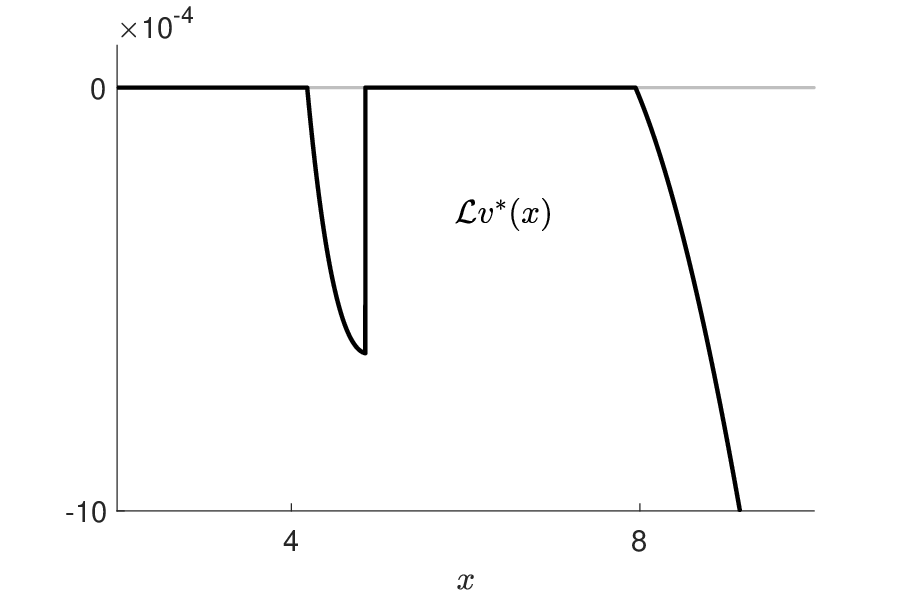}
         \caption{$\bL v^\ast$}
         \label{fig: LVast}
     \end{subfigure}	
      \begin{subfigure}[b]{0.47\textwidth}
         \centering
     \includegraphics[width=0.9\textwidth]{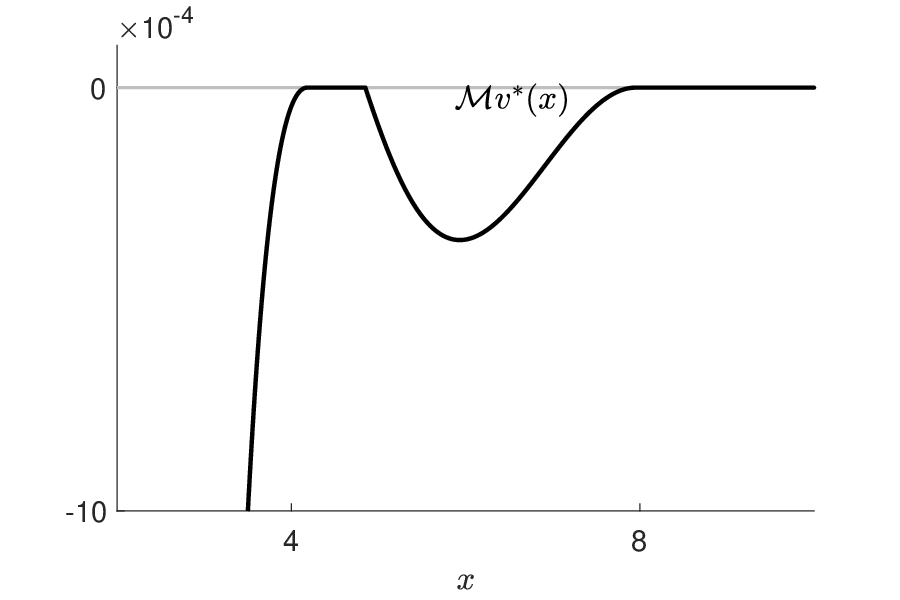}
         \caption{$\bM v^\ast$}
         \label{fig: MVast}
     \end{subfigure}	
 
        \caption{Graphs of  $\bL v^\ast$ and  $\bM v^\ast$.} 
        \label{fig:MV and AV opt}
\end{figure}

 \begin{remark} 
We have not  rigorously  proved that \eqref{eq: critical level3}  and \eqref{eq: system of equations}      have solutions  with the   properties in (i)-(ii). However, it is easy to check that such a solution exists (see footnote \ref{foot: fsolve}).  The  difficult part of the proof was coming up with the correct guess that $V$ is merely \emph{once} continuously differentiable    at  $\theta$. Here we have borrowed heavily from \cite{LLS03}, where this result is conjectured  without a verification argument for non-$C^2$-functions.
 \end{remark} 
 
\begin{remark} 
Note that the agent makes \emph{at most one} switch  between $\lambda$ and $b_1$  when ${\bk}^\ast$ is used: once $X$ drops to $\theta$, a barrier is enforced at  $b_1$.  In the terminology of \cite{DZ94},  $\theta$ is   \enquote{repelling}   for the optimally controlled process. It is worth noting that the breakdown of smooth fit at the repelling  boundary  is consistent with  \cite{DFM15,DFM18}.   In retrospect, the mere $C^1$-smoothness of $V$ at  $\theta$ is also in agreement with \cite{DZ94}. Indeed, the solution to our problem may   be viewed as the solution to a problem involving   control \emph{and}  stopping, where $\theta$ is the optimal stopping boundary. 
 \end{remark} 
 \begin{remark} \label{rem: sensitivity to perturb}
In the case  \eqref{eq: parameters}  that we considered, the  form of the optimal    policy is  quite sensitive to perturbations of the  parameters.  If   $\mu$  is slightly decreased,   then  \eqref{eq: critical level3} has a unique solution $b  \approx 4$ and (it turns out) the optimal policy is a barrier at this threshold. If    $\mu$ is slightly \emph{increased}, the optimal policy is a barrier at $b \approx 8$.  (In view of this and the continuity of  $v^\ast$, it  should come as no surprise that   the   advantage of using ${\bk}^\ast$ over ${{\bk}^{(b_1)}}$ is less than $0.01\,\%$.) Informally, the phenomenon   that we have encountered  arises on the boundary between two parts of the parameter space on which the optimal policies have distinct qualitative properties.  We elaborate   this interpretation in the next and final section of the paper. 
  \end{remark} 
\begin{remark}\label{rem: no L}  The phenomenon that we have encountered does not arise in the absence of reflection (i.e., if the reflection process $\dL$  is removed from \eqref{eq: X}).  
 The analysis  of this much simpler problem 
 is similar to that of the problem that we have studied, but the condition $v'(0)=0$ is no longer needed to ensure that     \eqref{def: Y} is a martingale.  For any $\eta$ satisfying  \Cref{assumption1}, the value of  ${\bk}^{(b)}$ is    given by (compare  \eqref{def: V-Z})  
\begin{align}\label{def: v no reflection}
\widetilde{V}_{{\bk}^{(b)}}(x)= \begin{cases} &  \gamma_+^{-1}   \eta(b)   \cdot e^{\gamma_+ (x-b)},  \quad x \in (-\infty, b], \\
& \gamma_+^{-1}  \eta(b)+\int_{b}^x\eta(u)du, \quad x \in (b,  \infty).
 \end{cases}
 \end{align}  
Smooth ($C^2$) fit at $x=b$ leads to  the   equation  
\begin{align}\label{eq: b}
 \gamma_+\cdot \eta(b) =\eta'(b).
 \end{align}
  By \Cref{assumption1}(i), the function $b \mapsto  \gamma_+  \eta(b) -\eta'(b)$ is strictly increasing  on $(0, \infty)$, so \eqref{eq: b} has  a unique solution  $b^\ast \in (0, \infty)$.  Using concavity of $\eta$, it is not difficult to prove that ${\bk}^{(b^\ast)}$ is optimal  in {\bA} (see \cite{Jon08}). Hence, in the absence of reflection, the value function is  of class $C^2$ and there is  always an optimal    barrier policy. 
 \end{remark}
 
   \section{Final remarks on the optimal policy}\label{sect: discussion}
   It remains to  address the question of whether  the phenomenon portrayed in \Cref{fig: Fig1} should be viewed as a flaw of the reflected Brownian model \eqref{eq: X} in applications of  the type   that we have mentioned. We will suggest that this phenomenon does in fact correspond to observed  beha\-vior,  for which the model may thus provide a rational explanation.  This interpretation should be compared with Henderson and Hobson's \cite{HH08} interpretation of a phenomenon which at first appears similarly puzzling.
   
Let us first note that although the band policy that we have encountered appears to be a new phenomenon, policies involving multiple thresholds are well known to arise in some  problems of optimal control.  One such problem is the stochastic cash balance problem studied by Harrison et al \cite{HST83} in which the contents of a cash fund can be decreased or increased at proportional \emph{plus fixed} costs. Under  optimal  control, the  fund's manager effectuates an upward jump to $q>0$ each time $0$ is hit, and  a downward jump to $Q>q$ whenever  the contents process  reaches  $S>Q$.  As noted in  \cite{HST83}, this form of the optimal policy is easily  anticipated in view of the fixed transaction costs. When these costs tend to $0$,  the optimal  policy enforces a single upper  barrier.  The   band structure of the optimal policy  is thus a consequence of features of the problem that  are    not present in our model.

 The band policy portrayed in \Cref{fig: Fig1} is best understood as a switch between two  modes of operation.  The     modes in question are related to local maxima of   $b \mapsto V_{{\bk}^{(b)}}$.   As  previously noted (see \Cref{remark: three roots}), this function has two  local maxima  if the  equation \eqref{eq: critical level3}   for a smooth fit  has three roots,    the maxima occurring at the smallest and the largest root.  These roots may be thought of as providing the best representatives from each of two qualitatively different types of barrier policies.  Policies of the first type exert control where   marginal yield is high. These policies are   risky in the sense that there may be long time intervals during which no   yield is accumulated. The second type of policies   accumulate yield closer to the origin and are therefore less risky in this sense. In the case   \eqref{eq: parameters}  that we have studied, the optimal policy has an interesting interpretation:  The risk-neutral decision maker initially gambles on  the more risky strategy, but lowers risk if this strategy underperforms.  

Such  behavior can be observed  in roughly the following situation: An agent may choose between two plans, $A$ and $B$, where $A$ has the potential  of   yielding high marginal  reward while $B$ offers lower marginal reward at lower risk.  Here, the idea of falling back on       \emph{B} if \emph{A} underperforms is quite familiar.  Our results  suggest that such behavior can align with risk-neutral preferences if the expected reward of choosing \emph{A}   is close to that of   choosing  \emph{B}.  Additionally, the expected reward of choosing \emph{A}  should be \emph{smaller} than that of  choosing \emph{B}; an  expectation-maximizing agent  would not have an incentive to abandon    \emph{A}  if the opposite were true.
\bigskip
 
\subsection*{Acknowledgement} The problem dealt with in this paper was presented to me by Larry Shepp  on my arrival to Rutgers University as a visiting student from KTH on July 5th, 2003. I am grateful for that highly stimulating   summer, during which some of the results  presented in this paper were obtained. I am also grateful to  
Naomi  Zeitouni  and Ofer Zeitouni for helpful discussions, in 2003 and in 2023. %
\bibliographystyle{plain}
\bibliography{references-Barriers}
  \end{document}